\numberwithin{equation}{section}
\newtheorem {Theorem}                   {Theorem}
\newtheorem {RefTheorem}[equation]      {Theorem}
\newtheorem {Lemma}[equation]           {Lemma}
\newtheorem {Proposition}[equation]     {Proposition}
\newtheorem {corollary}[equation]       {Corollary}
\theoremstyle{definition}
\newenvironment{Remark} {\par\footnotesize\zremark}{~\\}{\endzremark}
\newcommand{\pr} {\smallskip\noindent{\bf Proof\,\,}}
\newcommand     {\comment}[1]   {}
\newcommand     {\mute}[2] {}
\newcommand     {\printname}[1] {}
\newcommand{\labell}[1] {\label{#1}\printname{#1}}
\def    \ind    {{\operatorname{ind}}}
\def    \ind   {\operatorname{ind}}
\def    \to     {\longrightarrow}
\def    \R      {{\mathbb R}}
\def    \Z      {{\mathbb Z}}
\def    \pr     {\operatorname{pr}}
\begin{document}

\title[Hilbert polynomials of Calabi Yau hypersurfaces]{ Hilbert polynomials of Calabi Yau hypersurfaces in toric varieties and lattice points in polytope boundaries}

\author{Jonathan Weitsman}
\thanks{ }
\address{Department of Mathematics, Northeastern University, Boston, MA 02115}
\email{j.weitsman@neu.edu}
\thanks{\today}

\begin{abstract}  
We show that the Hilbert polynomial of a Calabi-Yau hypersurface $Z$ in a smooth toric variety $M$ associated to a convex polytope $\Delta$ is given by a lattice point count in the polytope boundary $\partial \Delta,$ just as the Hilbert polynomial of $M$ is known to be given by a lattice point count  in the convex polytope $\Delta.$ Our main tool is a computation of the Euler class in $K$-theory of the normal line bundle to the hypersurface $Z,$ in terms of the Euler classes of the divisors corresponding to the facets of the moment polytope.  We observe a remarkable parallel between our expression for the Euler class and the inclusion-exclusion principle in combinatorics.  To obtain our result we combine these facts with the known relation between lattice point counts in the facets of $\Delta$ and the Hilbert polynomials of the smooth toric varieties corresponding to these facets.
\end{abstract}

\maketitle

\section{Introduction} 

In this paper we show how the Hilbert polynomial of a Calabi-Yau hypersurface in a toric variety is associated to a lattice point count in a polytope boundary, in analogy with a similar result for toric varieties, giving the Hilbert polynomial of a toric variety in terms of lattice points in the associated convex polytope. We begin by recalling that association.

\subsection{Lattice points and toric varieties}

Let $(M,\omega)$ be a smooth, compact, connected Kahler toric variety of real dimension $2m$  equipped with a holomorphic Hermitian line bundle $L$ with Chern connection $\nabla$ of curvature $\omega.$ The manifold $M$ has an effective Hamiltonian action of a compact torus $T^m$, with moment map
\[
\mu: M \to \mathfrak{t}^* \cong \mathbb{R}^m
\]
of image given by a lattice polytope $\Delta \subset \mathbb{R}^m$. 

Since $M$ is smooth, the polytope $\Delta$ is a Delzant polytope; that is, the a lattice polytope where the primitive lattice edge vectors at each vertex form a basis for $\Z^m.$ Delzant's Theorem (\cite{delzant}) shows that any such polytope arises from a smooth toric variety in this way. 

Let $\text{ind } \bar{\partial}_{L^k}$ denote the index of the Dolbeault operator $\bar{\partial}_{L^k}$ \cite{shan} on sections on $L^k,$ for $k \in \mathbb{Z}_+.$ Geometric quantization and the principle of ``Quantization commutes with reduction'' [GS] then give  the Hilbert polynomial of $M$  in terms of lattice points in the polytope $\Delta:$

\begin{RefTheorem}[See e.g. \cite{Fulton, Guillemin}]\labell{tvp}
The Hilbert polynomial of $M$ is given by
\[
\dim H^0(M, L^k) = \text{ind } \bar{\partial}_{L^k} = \#(k\Delta \cap \mathbb{Z}^m)
\]
\end{RefTheorem}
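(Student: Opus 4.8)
The plan is to prove this classical result in two stages, establishing each equality separately. The statement asserts two equalities: the index-theoretic identity $\dim H^0(M,L^k) = \ind\bar\partial_{L^k}$, and the combinatorial identity $\ind\bar\partial_{L^k} = \#(k\Delta \cap \Z^m)$. I would first dispose of the left equality using a cohomology vanishing argument, and then attack the right equality, which is the substantive geometric content, via equivariant index theory and localization.

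\smallskip

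\noindent\textbf{Stage 1: reducing the index to $H^0$.} First I would observe that the index of the Dolbeault operator twisted by $L^k$ computes the holomorphic Euler characteristic $\chi(M,L^k) = \sum_{i=0}^m (-1)^i \dim H^i(M,L^k)$. Since $M$ is a smooth projective toric variety and $L$ is ample (its curvature is the symplectic form $\omega$, hence positive), the Kodaira vanishing theorem — or more directly Demazure vanishing for ample line bundles on toric varieties — gives $H^i(M,L^k) = 0$ for all $i > 0$ and all $k \in \Z_+$. Therefore $\ind \bar\partial_{L^k} = \chi(M,L^k) = \dim H^0(M,L^k)$, which settles the first equality.

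\smallskip

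\noindent\textbf{Stage 2: computing the index as a lattice point count.} For the main equality I would use the fact that the $T^m$-action lifts to $L^k$, so $H^0(M,L^k)$ is a representation of $T^m$ and decomposes into weight spaces. The plan is to compute the equivariant index $\ind_{T}\bar\partial_{L^k}$ as a character on $T$ and then set the torus variable to the identity to recover the dimension. Applying the Atiyah--Bott--Berline--Vergne fixed point formula, the equivariant index localizes to a sum over the fixed points of the $T$-action, which correspond precisely to the vertices $v$ of the polytope $\Delta$. At each vertex the contribution is a geometric-series-type expression built from the weights of the torus action on the tangent space, which by the Delzant condition are the primitive edge vectors forming a lattice basis. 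The key combinatorial step is to recognize that summing these local contributions and extracting the character yields exactly $\sum_{w \in k\Delta \cap \Z^m} e^w$, so that each lattice point of $k\Delta$ contributes one weight to $H^0(M,L^k)$. Evaluating the character at the identity then gives $\#(k\Delta \cap \Z^m)$.

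\smallskip

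\noindent\textbf{Main obstacle.} The hard part is Stage 2, specifically showing that the fixed-point contributions assemble into the clean lattice point count rather than merely an abstract character. The subtlety is that the individual vertex contributions involve formal geometric series whose regions of convergence overlap only after summation, so one must argue carefully — e.g. by the Brion--Lawrence--Varchenko theory of generating functions for lattice polytopes, or by an Euler--Maclaurin/Khovanskii--Pukhlikov argument — that the telescoping cancellation across vertices leaves precisely the indicator function of $k\Delta \cap \Z^m$. An alternative, perhaps cleaner, route is to invoke ``quantization commutes with reduction'' as signaled in the text: the weight-$w$ multiplicity in $H^0(M,L^k)$ equals the dimension of the quantization of the symplectic reduction at $w/k$, which is nonzero (and equal to one) exactly when $w \in k\Delta$. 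I would present whichever of these two arguments keeps the combinatorial bookkeeping most transparent.
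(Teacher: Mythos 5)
The paper offers no proof of Theorem \ref{tvp} at all: it is stated as a classical result with a pointer to \cite{Fulton, Guillemin}, and the only argument gestured at is ``quantization commutes with reduction'' \cite{gs}, recorded again without proof as item (2) of Theorem \ref{factstv}. Your proposal is therefore necessarily a different route, and it is a correct one: the two-stage structure (vanishing of higher cohomology to identify the index with $\dim H^0$, then an equivariant computation to identify the index with the lattice count) is the standard architecture of the proofs in the literature. Two comments. First, a minor slip in Stage 1: Kodaira vanishing gives $H^i(M, K_M \otimes L^k)=0$ for $i>0$, not $H^i(M,L^k)=0$, which fails for general polarized varieties; the statement you actually need is precisely the toric (Demazure) vanishing theorem that you list as the ``more direct'' alternative, so you should lead with that and drop the Kodaira citation. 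Second, your Stage 2 via Atiyah--Bott localization trades the input of \cite{gs} for the combinatorial input of Brion--Lawrence--Varchenko: the vertex contributions $e^{k\mu(v)}\prod_j(1-e^{-\alpha_{v,j}})^{-1}$, with tangent weights the primitive edge vectors guaranteed to be a lattice basis by the Delzant condition \cite{delzant}, sum to the generating function of $k\Delta\cap\Z^m$. That buys you a self-contained, purely fixed-point-theoretic proof with an explicit character formula, at the cost of proving Brion's identity. Your alternative closing route via \cite{gs} is the one the paper implicitly endorses, but be aware it is not free either: lattice points on $\partial\Delta$ have $w/k$ a singular value of the moment map, so multiplicity one there requires the singular extension of quantization-commutes-with-reduction, not the regular-value statement of \cite{gs}. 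Either route, carefully executed, completes the proof; the paper itself executes neither.
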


On the other hand, the Riemann Roch theorem gives a way of computing $\text{ind } \bar{\partial}_{L^k}.$  To do so, we first describe the polytope $\Delta$  as an intersection of half spaces.  For $\lambda_i  \in \mathbb{R}_{\geq 0},$ $i=1,\dots,d,$ let
\[
  \Delta(\lambda_1 , \ldots, \lambda_d ) = \bigcap_{i=1}^d H_i(\lambda_i)
\]
be the intersection of half-spaces $H_i(\lambda_i)$ given by
\[
H_i(\lambda_i) = \{x \in \mathbb{R}^m : x \cdot n_i \leq \lambda_i\}, \quad i=1,\ldots,d
\]
where $n_i \in \mathbb{R}^m$ are primitive lattice vectors normal to the facets $F_i$ of $\Delta.$ Then for some nonnegative real numbers $\lambda_i^0,$ $ i = 1, \dots, d,$ 
we have

\[
\Delta = \Delta(\lambda_1^0 , \dots, \lambda_d^0 ).\] 

In view of Theorem \ref{tvp}, the Riemann-Roch formula then gives  Khovanskii's formula for the number of lattice points in a Delzant polytope (see Section \ref{section2} for a sketch of the proof):

\begin{RefTheorem}[Khovanskii \cite{k,kk,kp}] \labell{hlp} The number of lattice points in the polytope $\Delta$ associated to $M$ is given by
\begin{equation}\labell{khoveq}
 \#(\Delta \cap \mathbb{Z}^m)= \prod_{i=1}^d \text{Td}\left(\frac{\partial}{\partial \lambda_i}\right) \text{vol}(\Delta(\lambda_1, \dots,\lambda_d))\Big|_{ \lambda_i=\lambda_i^0}
\end{equation}

\noindent where the infinite order differential operators $\text{Td}(\frac{\partial}{\partial \lambda_i})$ are given by
\[
\text{Td}\left(\frac{\partial}{\partial \lambda_i}\right) = \sum_{j=0}^\infty \frac{b_j}{j!} \left(\frac{\partial}{\partial \lambda_i}\right)^j
\] and the coefficients $b_j$ are the Bernoulli numbers, given by the power series expansion at $0$ of the function

$$\text{Td}(x) = \frac{x}{1-e^{-x}}.$$
\end{RefTheorem}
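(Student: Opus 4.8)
The plan is to combine the Hirzebruch--Riemann--Roch theorem with Theorem \ref{tvp} and an elementary observation: the exponential is an eigenfunction of differentiation. First I would set up the dictionary between the polytope data and the cohomology of $M$. To each facet $F_i$ there corresponds a torus-invariant divisor $D_i$, and I write $x_i = c_1(\mathcal{O}(D_i)) \in H^2(M)$; the defining parameters enter through the symplectic (first Chern) class $c_1(L) = \sum_{i=1}^d \lambda_i^0 x_i$. The two geometric inputs I need are: (i) the generalized Euler sequence of a smooth toric variety, which yields the factorization $\Td(TM) = \prod_{i=1}^d \Td(x_i)$ of the Todd class over the facet divisors; and (ii) the Duistermaat--Heckman description of the pushforward of Liouville measure, which identifies the volume function of the deformed polytope with a cohomological integral,
\[
\text{vol}(\Delta(\lambda_1,\dots,\lambda_d)) = \frac{1}{m!}\int_M \Big(\sum_{j=1}^d \lambda_j x_j\Big)^m = \int_M e^{\sum_j \lambda_j x_j},
\]
the last equality holding because only the degree-$2m$ part of the exponential survives integration over $M$.

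Next I would exploit that $e^{\sum_j \lambda_j x_j}$ is a formal eigenfunction of each $\partial/\partial\lambda_i$ with eigenvalue $x_i$. Applying the power series $\Td(\partial/\partial\lambda_i) = \sum_{k\geq 0}\frac{b_k}{k!}(\partial/\partial\lambda_i)^k$ term by term therefore replaces $\partial/\partial\lambda_i$ by $x_i$, so that $\Td(\partial/\partial\lambda_i)$ acts on the integrand as multiplication by $\Td(x_i)$. Since the volume is a polynomial of degree $m$ in the $\lambda_j$, only finitely many terms of each infinite-order operator contribute and no convergence issue arises. Commuting the operators past the integral and evaluating at $\lambda_i = \lambda_i^0$ then gives
\[
\prod_{i=1}^d \Td\!\left(\frac{\partial}{\partial\lambda_i}\right)\text{vol}(\Delta(\lambda))\Big|_{\lambda_i=\lambda_i^0} = \int_M \prod_{i=1}^d \Td(x_i)\, e^{\sum_j \lambda_j^0 x_j} = \int_M \Td(TM)\,\Ch(L).
\]

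Finally I would invoke Hirzebruch--Riemann--Roch, $\int_M \Td(TM)\,\Ch(L) = \ind\,\bar\partial_{L}$, together with Theorem \ref{tvp}, $\ind\,\bar\partial_{L} = \#(\Delta\cap\Z^m)$, to conclude.

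I expect the main obstacle to lie in the dictionary (i)--(ii), specifically in verifying that the volume of the \emph{deformed} polytope $\Delta(\lambda)$ agrees with $\int_M e^{\sum_j \lambda_j x_j}$ for all $\lambda$ in a neighborhood of $\lambda^0$, not merely at $\lambda^0$ itself; this is essential since the formula requires differentiating in $\lambda$. The content here is that as the $\lambda_i$ vary the classes $x_i$ remain the divisor classes of the fixed variety $M$, and the Duistermaat--Heckman polynomial is exactly the normalized self-intersection $\frac{1}{m!}\int_M(\sum_j\lambda_j x_j)^m$. Once this geometric input is secured, the eigenfunction manipulation and the appeal to Riemann--Roch are routine.
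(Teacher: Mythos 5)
Your proposal is correct and follows essentially the same route as the paper: both arguments combine the Riemann--Roch theorem, the identification $\operatorname{ind}\bar\partial_L = \#(\Delta\cap\Z^m)$ from Theorem \ref{tvp}, the Duistermaat--Heckman identity $\operatorname{vol}(\Delta(\lambda)) = \int_M e^{\omega_{\lambda_1,\dots,\lambda_d}}$ with $[\omega_{\lambda_1,\dots,\lambda_d}] = \sum_i \lambda_i c_1(L_{F_i})$, and the splitting $TM \simeq \oplus_{i=1}^d L_{F_i}$, together with the observation that $\partial/\partial\lambda_i$ acts on the exponential as multiplication by $c_1(L_{F_i})$, so that $\prod_i \operatorname{Td}(\partial/\partial\lambda_i)$ produces $\operatorname{Td}(TM)$ under the integral. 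The deformation issue you flag is handled in the paper exactly as you anticipate: for $\lambda$ near $\lambda^0$ the forms $\omega_{\lambda_1,\dots,\lambda_d}$ remain symplectic on the fixed manifold $M$ with moment image $\Delta(\lambda_1,\dots,\lambda_d)$, and the volume is a polynomial in the $\lambda_i$, so the infinite-order operators pose no difficulty.
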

\begin{Remark}\labell{infrem} Note that the volume $\text{vol}(\Delta(\lambda_1,\dots,\lambda_d))$ is a polynomial in the $\lambda_i,$ so there is no difficulty in applying the infinite order differential operators appearing in (\ref{khoveq}) to this volume function.\end{Remark}
\begin{Remark}  The generalization of Theorem \ref{hlp} to polytopes which are not Delzant is related to the geometry of singular toric varieties.  Some references are \cite{cs,bv, dr, guillemin,ksw}\end{Remark}

\subsection{Calabi-Yau hypersurfaces in toric varieties}

For each facet $F_i \in \Delta$, let $D_i = \mu^{-1}(F_i)$ be the corresponding complex codimension-one subvariety of $M$. Each subvariety $D_i$ corresponds to a line bundle $L_{F_i} \to M$. Batyrev \cite{batyrev} showed that, under certain conditions,  the divisor class $[D_{F_1}] + \dots + [D_{F_d}]$ corresponding to the line bundle $L_{F_1} \otimes \cdots \otimes {L_{F_d}}$ contains a smooth connected representative $Z,$ giving a Calabi-Yau hypersurface in $M$. This hypersurface is equipped with the holomorphic Hermitian line bundle $L|_Z$ and its powers and with a Dolbeault operator $\bar{\partial}_{({L^k}|_Z)} .$  The divisor class $[D_{F_1}] + \dots + [D_{F_d}]$ also has the singular representative $Z_{\rm sing}= \cup_{i=1}^d D_i$ whose moment image is $\cup_{i=1}^d F_i = \partial\Delta$.
 
The main result of this paper is that, in analogy to Theorem \ref{tvp}, we have a lattice point formula for the Hilbert polynomial of the Calabi-Yau hypersurface $Z:$
 
\begin{Theorem}\labell{mainthm} Suppose the divisor class $[D_{F_1}] + \dots + [D_{F_d}]$ corresponding to the line bundle $ L_{F_1} \otimes \cdots \otimes {L_{F_d}}$  has a representative given by a smooth connected complex hypersurface $Z \subset M.$
Then the Hilbert polynomial of $Z$ is given by
\[
\text{ind } \bar{\partial}_{({L^k}|_Z)} = \#(k(\partial\Delta) \cap \mathbb{Z}^m)
\]
\end{Theorem}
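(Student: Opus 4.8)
The plan is to compute $\text{ind}\,\bar{\partial}_{(L^k|_Z)} = \chi(Z, L^k|_Z)$ by pushing the computation from $Z$ up to the ambient toric variety $M$, where Theorem \ref{tvp} is available. Since $Z$ represents $[D_{F_1}]+\dots+[D_{F_d}]$, we have $\mathcal{O}_M(Z)=\mathcal{O}_M(\sum_i D_i) = -K_M$, so $\mathcal{O}_M(-Z)=K_M$. I would feed this into the ideal-sheaf sequence of the smooth divisor $Z$, twisted by the locally free sheaf $L^k$:
\[
0 \to L^k\otimes K_M \to L^k \to L^k|_Z \to 0.
\]
Taking Euler characteristics (equivalently, indices of Dolbeault operators), the associated long exact sequence collapses to
\[
\text{ind}\,\bar{\partial}_{(L^k|_Z)} = \chi(M, L^k) - \chi(M, L^k\otimes K_M).
\]
I emphasize that this is the Euler characteristic rather than $\dim H^0$; for a Calabi-Yau $Z$ the higher cohomology does not vanish, but the quantity in the theorem is precisely the index, so no vanishing theorem on $Z$ is needed.

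The first term is handled directly by Theorem \ref{tvp}: $\chi(M, L^k) = \#(k\Delta\cap\mathbb{Z}^m)$. The whole argument then reduces to identifying the second term as the count of \emph{interior} lattice points,
\[
\chi(M, L^k\otimes K_M) = \#(\operatorname{int}(k\Delta)\cap\mathbb{Z}^m),
\]
which I regard as the main obstacle. I would establish it in one of two equivalent ways. The direct route uses Kodaira vanishing for the ample bundle $L^k$, so that $\chi(M,K_M\otimes L^k)=h^0(M, K_M\otimes L^k)$, together with the standard toric identification of $H^0(M, K_M\otimes L^k)$ with the interior lattice points of $k\Delta$. The more formal route writes $\chi(M, L^k\otimes K_M) = (-1)^m\chi(M, L^{-k})$ by Serre duality, observes that $\chi(M, L^j)$ coincides with the Ehrhart polynomial $L_\Delta(j)$ of $\Delta$ (the two agree with $\#(j\Delta\cap\mathbb{Z}^m)$ for all $j\ge 0$, hence as polynomials), and applies Ehrhart--Macdonald reciprocity $L_\Delta(-k)=(-1)^m\#(\operatorname{int}(k\Delta)\cap\mathbb{Z}^m)$. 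Note that $\Delta$ is a lattice polytope, so each facet-distance is integral and both routes are valid without integrality caveats.

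Granting this identification, subtracting the two terms gives
\[
\text{ind}\,\bar{\partial}_{(L^k|_Z)} = \#(k\Delta\cap\mathbb{Z}^m) - \#(\operatorname{int}(k\Delta)\cap\mathbb{Z}^m) = \#(\partial(k\Delta)\cap\mathbb{Z}^m),
\]
and since dilation commutes with taking the boundary, $\partial(k\Delta)=k(\partial\Delta)$, completing the proof. Observe that this works for an arbitrary ample polarization $L$: the anticanonical nature of $Z$ enters only through the kernel $L^k\otimes K_M$, not through any reflexivity assumption on $\Delta$.

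Finally, I expect the author's approach to follow the combinatorially more transparent but technically heavier inclusion--exclusion route suggested by the abstract. Expanding the $K$-theory class $[\mathcal{O}_Z] = 1-[\mathcal{O}_M(-Z)] = 1-\prod_{i=1}^d(1-[\mathcal{O}_{D_i}])$ as the alternating sum $\sum_{\emptyset\ne I}(-1)^{|I|-1}[\mathcal{O}_{D_I}]$ over the toric strata $D_I=\bigcap_{i\in I}D_i$, then multiplying by $[L^k]$ and taking the pushforward to a point, one applies Theorem \ref{tvp} to each stratum $D_I$ (whose moment polytope is the face $F_I$) to reproduce the inclusion--exclusion expansion $\#(k(\partial\Delta)\cap\mathbb{Z}^m)=\sum_{\emptyset\ne I}(-1)^{|I|-1}\#(kF_I\cap\mathbb{Z}^m)$ of the boundary count. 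On that route the crux is the transversality identity $\prod_{i\in I}[\mathcal{O}_{D_i}]=[\mathcal{O}_{D_I}]$ in $K(M)$, together with the matching of each face lattice count with the Hilbert polynomial of the corresponding toric stratum.
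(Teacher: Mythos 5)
Your proof is correct, but it takes a genuinely different route from the paper's. The paper never forms the ideal-sheaf sequence; instead it expands the $K$-theoretic Euler class of the normal bundle, $e^{K}(\bigotimes_{i=1}^d L_{F_i}) = 1-\bigotimes_{i=1}^d L_{F_i}^*$, as the alternating sum $\sum_{\emptyset\neq I}(-1)^{|I|+1}\bigotimes_{i\in I}(1-L_{F_i}^*)$ (Proposition \ref{eclass}), recognizes the pushforward of each term as the index $(\pi_{D_I})_!(L^k|_{D_I})$ on the toric stratum $D_I=\bigcap_{i\in I}D_i$ (smooth by Lemma \ref{smoothness}), applies Theorem \ref{tvp} face by face, and finishes with set-theoretic inclusion-exclusion for $\partial\Delta=\bigcup_i F_i$ --- exactly the route you sketch in your final paragraph, whose crux (the identity $\prod_{i\in I}(1-L_{F_i}^*)=[\mathcal{O}_{D_I}]$ for the clean intersections $D_I$) you identified correctly. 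Your argument instead collapses everything to a two-term computation: additivity of $\chi$ in $0\to L^k\otimes K_M\to L^k\to L^k|_Z\to 0$ (using the toric fact $K_M=-\sum_i D_i$, which the paper never needs), Theorem \ref{tvp} for $\chi(M,L^k)$, and the identification $\chi(M,L^k\otimes K_M)=\#(\operatorname{int}(k\Delta)\cap\mathbb{Z}^m)$ via either Kodaira vanishing plus the standard description of sections of $K_M\otimes L^k$, or Serre duality plus Ehrhart--Macdonald reciprocity. This is shorter, avoids verifying smoothness of all strata and the $K$-theoretic intersection identity, and makes explicit that only ampleness of $L$ is used. What it costs: the reciprocity variant imports a nontrivial combinatorial theorem, which runs against the paper's stated aim of deriving the combinatorics (Theorem \ref{bdp}) from geometry, so for that purpose your Kodaira-vanishing variant is the right one; and either variant bypasses the stratum-by-stratum formula $\operatorname{ind}\bar{\partial}_{(L^k|_Z)}=\sum_{\emptyset\neq I}(-1)^{|I|+1}\#(kF_I\cap\mathbb{Z}^m)$, i.e.\ the parallel between the Euler class in $K$-theory and inclusion-exclusion that the author regards as the main idea of the paper. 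One stray remark of yours is backwards: since $K_Z$ is trivial and $L^k|_Z$ is ample, Kodaira vanishing on $Z$ kills $H^{i}(Z,L^k|_Z)$ for $i>0$, $k\geq 1$, so higher cohomology does vanish; this is harmless, since your computation only uses the Euler characteristic.
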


As a corollary, we have a geometric proof of the following analog of Khovanskii's Theorem, already proved by combinatorial methods in \cite{qcy}:
 
\begin{Theorem}[See \cite{qcy}]\labell{bdp}  Suppose the divisor class $[D_{F_1}] + \dots + [D_{F_d}]$ corresponding to the line bundle $ L_{F_1} \otimes \cdots \otimes {L_{F_d}}$  has a representative given by a smooth connected complex hypersurface $Z \subset M.$
The number of lattice points in the boundary $\partial\Delta$ of the polytope $\Delta$ is given by
\begin{equation}\labell{bdpeq}
\#(\partial\Delta \cap \mathbb{Z}^m) = \left(\prod_{i=1}^d \hat{A}\left(\frac{\partial}{\partial \lambda_i}\right) \right)\frac{1}{\hat{A}}\left(\sum_{i=1}^d\frac{\partial}{\partial \lambda_i}\right) \text{vol}(\partial\Delta(\lambda_1,\dots,\lambda_d))\Big|_{\lambda_i=\lambda_i^0}
\end{equation}
where the differential operators $\hat{A}(\frac{\partial}{\partial \lambda_i})$ and $\frac{1}{\hat{A}}(\sum_{i=1}^d\frac{\partial}{\partial \lambda_i})$ are given by
\[
\hat{A}\left(\frac{\partial}{\partial \lambda_i}\right) = \sum_{j=0}^\infty  c_{2j}  \left(\frac{\partial}{\partial \lambda_i}\right)^{2j}
\]
\[
\frac{1}{\hat{A}}\left(\sum_{i=1}^d\frac{\partial}{\partial \lambda_i}\right) =  \sum_{j=0}^\infty \frac{1}{2^{2j}(2j+1)!} \left(\sum_{i=1}^d\frac{\partial}{\partial \lambda_i}\right)^{2j};
\]

\noindent here the coefficients ${c_{2j}}$ are given by the  power series expansion at zero of the function
\[
\hat{A}(x) = \frac{x/2}{\sinh(x/2)} =  \sum_{j=0}^\infty {c_{2j}}x^{2j}\]

\noindent and similarly
\[
\quad \frac{1}{\hat{A}(x)} = \frac{\sinh(x/2)}{x/2} =  \sum_{j=0}^\infty \frac{1}{2^{2j}(2j+1)!} x^{2j}.
\]
Also, $\text{vol}(\partial\Delta(\lambda_1,\dots,\lambda_d))$ is defined to be the sum of the Euclidean volumes of the facets, each divided by the length of the primitive lattice vector normal to it (See e.g. \cite{robins}, Lemma 5.18).\footnote{Note that as in Remark \ref{infrem}, the volume $\text{vol}(\partial\Delta(\lambda_1,\dots,\lambda_d))$ is a polynomial in the $\lambda_i,$ so there is no difficulty in applying the infinite order differential operators appearing in (\ref{bdpeq}) to this volume function.}
\end{Theorem}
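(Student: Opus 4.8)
The plan is to derive (\ref{bdpeq}) from Theorem \ref{mainthm} by applying the Riemann-Roch theorem to the Calabi-Yau hypersurface $Z$ and then rewriting the resulting characteristic-class integral in differential-operator form, using the same correspondence between lattice-point integrals and derivatives of the volume that underlies the proof of Khovanskii's formula (Theorem \ref{hlp}) sketched in Section \ref{section2}. Setting $k=1$ in Theorem \ref{mainthm} identifies the left-hand side of (\ref{bdpeq}) with the index $\ind\bar\partial_{L|_Z}$, so it is enough to show that this index equals the right-hand side of (\ref{bdpeq}); in other words, both sides of (\ref{bdpeq}) will be shown to compute $\ind\bar\partial_{L|_Z}$.

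Writing $x_i := c_1(L_{F_i}) = [D_i] \in H^2(M)$, I would first apply Hirzebruch-Riemann-Roch on $Z$,
\[
\ind\bar\partial_{L|_Z} = \int_Z \Td(TZ)\,\ch(L|_Z),
\]
and exploit that $Z$ is Calabi-Yau. Since $[Z] = \sum_i [D_i] = -K_M$, adjunction gives $K_Z = (K_M + [Z])|_Z = 0$, so $c_1(TZ) = 0$ and the Todd class collapses to the $\hat{A}$-genus, $\Td(TZ) = e^{c_1(TZ)/2}\hat{A}(TZ) = \hat{A}(TZ)$. Feeding the normal bundle sequence $0 \to TZ \to TM|_Z \to N_Z \to 0$ into the multiplicativity of $\hat{A}$ then yields the factorization $\hat{A}(TZ) = \hat{A}(TM)|_Z / \hat{A}(N_Z)$.

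Next I would evaluate the two factors in terms of the facet classes. The generalized Euler sequence of the smooth toric variety $M$ shows that the Chern roots of $TM$ are precisely the $x_i$, whence $\hat{A}(TM) = \prod_{i=1}^d \hat{A}(x_i)$; this is the source of the product operator $\prod_i \hat{A}(\partial/\partial\lambda_i)$. The normal bundle is the line bundle $\cO_M(Z)|_Z$ with $c_1(N_Z) = s|_Z$, where $s := \sum_{i=1}^d x_i$, so that $1/\hat{A}(N_Z) = \big(\sinh(s/2)/(s/2)\big)|_Z$, the source of the single combined operator $\frac{1}{\hat{A}}(\sum_i \partial/\partial\lambda_i)$; note that this factor is attached to the total class $s$ rather than to the individual $x_i$ precisely because $N_Z$ is one line bundle whose first Chern class is the full sum. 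Converting the integral over $Z$ into one over $M$ by the Poincaré-dual relation $\int_Z \beta|_Z = \int_M \beta\, s$ gives
\[
\ind\bar\partial_{L|_Z} = \int_M \Big(\prod_{i=1}^d \hat{A}(x_i)\Big)\,\frac{1}{\hat{A}(s)}\, s\, e^{c_1(L)}.
\]

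Finally I would invoke the toric dictionary from the proof of Theorem \ref{hlp}: differentiation $\partial/\partial\lambda_i$ corresponds to cup product with $x_i$, evaluation at $\lambda_i = \lambda_i^0$ to integration against $e^{c_1(L)} = e^{\sum_i \lambda_i^0 x_i}$, and, crucially, the lattice-normalized boundary volume to $\text{vol}(\partial\Delta(\lambda)) = \sum_i \text{vol}(D_i) = \int_M s\, e^{\sum_j \lambda_j x_j}$. Under this dictionary the Poincaré-dual factor $s$ is already incorporated into $\text{vol}(\partial\Delta(\lambda))$, so applying the operators $\prod_i \hat{A}(\partial/\partial\lambda_i)$ and $\frac{1}{\hat{A}}(\sum_i \partial/\partial\lambda_i)$ and setting $\lambda_i = \lambda_i^0$ reproduces exactly the integral above, establishing (\ref{bdpeq}). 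I expect the main obstacle to lie in this last identification: one must verify that the sum over facets of the Euclidean volume divided by the normal lattice length equals $\int_M s\, e^{\sum_j \lambda_j x_j}$---that is, that the lattice-normalized volume of each facet $F_i$ is the symplectic volume of the toric subvariety $D_i$---and then track that the extra factor $s$ produced by the Poincaré dual of $Z$ is absorbed into the boundary volume rather than into the $\hat{A}$-operators, so that the operators appearing in (\ref{bdpeq}) are exactly $\hat{A}$ and $1/\hat{A}$ with no stray compensating term.
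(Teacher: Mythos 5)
Your proof is correct, and it reaches the paper's key intermediate formula by a genuinely different route. The paper never applies Riemann--Roch on $Z$: it starts from the pushforward identity established in the proof of Theorem \ref{mainthm}, namely $\ind \bar{\partial}_{(L|_Z)} = (\pi_M)_!\bigl(L \otimes (1-\bigotimes_{i=1}^d L_{F_i}^*)\bigr)$, applies Riemann--Roch on $M$ to get $\int_M \Td(TM)\, e^{c_1(L)}(1-e^{-s})$ with $s = \sum_i c_1(L_{F_i})$, and then converts Todd into $\hat{A}$ by the purely algebraic identity $\Td(x) = e^{x/2}\hat{A}(x)$, so that $\Td(TM)(1-e^{-s}) = \prod_i \hat{A}(c_1(L_{F_i}))\, e^{s/2}(1-e^{-s}) = \prod_i \hat{A}(c_1(L_{F_i}))\, \frac{1}{\hat{A}(s)}\, s$. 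You instead work on $Z$ itself: Hirzebruch--Riemann--Roch on $Z$, adjunction together with $-K_M = \sum_i [D_i]$ to get $c_1(TZ)=0$ and hence $\Td(TZ) = \hat{A}(TZ)$, multiplicativity of $\hat{A}$ across the normal bundle sequence, and Poincar\'e duality $\int_Z \beta|_Z = \int_M \beta\, s$, landing on exactly the same integral $\int_M \prod_i \hat{A}(c_1(L_{F_i}))\, \frac{1}{\hat{A}(s)}\, s\, e^{c_1(L)}$. From that point on the two arguments coincide: the Duistermaat--Heckman dictionary turns cup product with $c_1(L_{F_i})$ into $\partial/\partial\lambda_i$ and evaluation at $\lambda_i=\lambda_i^0$ into integration against $e^{c_1(L)}$, and the identity $\sum_i \partial/\partial\lambda_i\, \operatorname{vol}(\Delta(\lambda_1,\dots,\lambda_d)) = \operatorname{vol}(\partial\Delta(\lambda_1,\dots,\lambda_d))$ --- the step you flagged as the main obstacle --- is precisely the standard fact the paper also invokes, with the same lattice normalization, citing \cite{robins}, so it is not a gap. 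What your route buys is a conceptual explanation for why the $\hat{A}$-genus appears at all ($Z$ is Calabi--Yau, so its Todd class \emph{is} its $\hat{A}$-class), and it uses only the statement of Theorem \ref{mainthm} rather than the machinery of its proof; what the paper's route buys is uniformity --- every computation stays on $M$ and runs through the $K$-theoretic Euler class of Proposition \ref{eclass}, with no need for adjunction, the normal bundle sequence on $Z$, or indeed any geometry of $Z$ beyond the pushforward formula.
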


Note that Theorem \ref{bdp} is an entirely combinatorial result.  In \cite{qcy} we proved Theorem \ref{bdp} by pure combinatorial methods, based on the results of \cite{a,aw}.\footnote{The combinatorial argument in \cite{qcy} does not require any condition about a smooth representative for $\otimes_{i=1}^d L_{F_i}.$} We noted in \cite{qcy}, however, that Theorem \ref{bdp} should morally be provable by geometric methods, if we could imagine that $Z_{\text{sing}} = \cup_{i=1}^d D_i$ was a smooth manifold. However, this is not remotely the case, even for $M = \mathbb{CP}^2$ (See Figure 1); while the smooth deformations $Z$ of $Z_{\text{sing}}$ are not Hamiltonian $T^m$-spaces, so that the methods of equivariant topology used in the proof of Theorem \ref{hlp} do not apply. In this paper we show how computations in $K$ theory allow us to overcome the problems arising from the singularities of $Z_{\text{sing}}$ and the absence of a useful torus action on $Z$, and obtain the Hilbert polynomial of $Z$ (Theorem \ref{mainthm}) and as a corollary a {\em geometric} proof of Theorem \ref{bdp}.

\begin{figure} 
       \includegraphics[width=.5\textwidth]{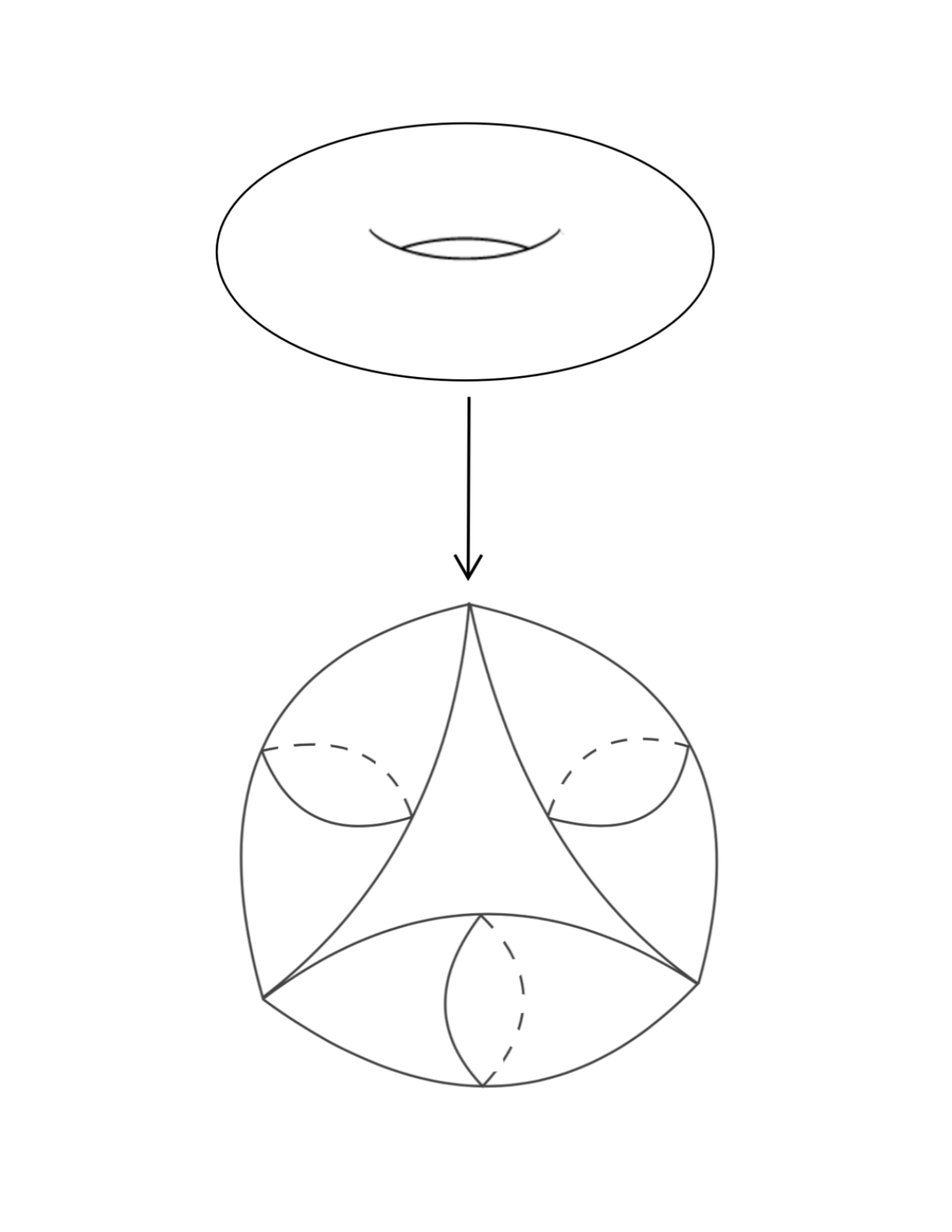}
\caption{The singular Calabi Yau $Z_{\rm sing} \subset {\mathbb CP}^2$ as a degeneration of a torus}
   \end{figure}

\begin{Remark}  From the point of view of geometric quantization, Theorem \ref{mainthm} shows that the geometrically defined Hilbert polynomial of $Z$ agrees with the formal geometric quantization (see \cite{weitsman, paradan})  of $Z$ used in \cite{qcy}.  \end{Remark}

As a Corollary of Theorem \ref{mainthm}, the combinatorial computations in Section 3.3 of \cite{qcy} of lattice point counts in boundaries of low dimensional simplices give the following computations of Hilbert polynomials of low dimensional Calabi Yau manifolds.  This can be an efficient way of computing the Hilbert polynomials:  In these examples, the Hilbert polynomials were computed by counting lattice points by inspection for small $k,$ and deducing the coefficients of the Hilbert polynomial from that data.  See \cite{qcy} for details.

\begin{corollary} The Hilbert polynomials of Calabi Yau hypersurfaces in low dimensional projective spaces are given by the following formulas:
\begin{itemize}

\item Hilbert polynomial of a torus in ${\mathbb CP}^2$ (cf. Section 3.3.1 of \cite{qcy}):

Let $Z_1 \subset {\mathbb CP}^2$ denote a smooth torus given by a homogeneous cubic polynomial.  Let $L \to {\mathbb CP}^2$ be the tautological line bundle.

Then the Hilbert polynomial of $Z_1$ is given by

$$ \ind{\bar{\partial}}_{(L^k|_{Z_1})} = 3k.$$
\item Hilbert polynomial of a K3 surface in ${\mathbb CP}^3$ (cf. Section 3.3.2 of \cite{qcy}):

Let $Z_2 \subset {\mathbb CP}^3$ denote a smooth K3 surface  given by a homogeneous quartic polynomial.  Let $L \to {\mathbb CP}^3$ be the tautological line bundle.

Then the Hilbert polynomial of $Z_2$ is given by

$$ \ind{\bar{\partial}}_{(L^k|_{Z_2}) }= 2k^2 + 2.$$
\item Hilbert polynomial of a quintic 3-fold in ${\mathbb CP}^4$ (cf. Section 3.3.3 of \cite{qcy}):

Let $Z_3 \subset {\mathbb CP}^4$ denote a smooth quintic 3-fold given by a homogeneous quintic polynomial.  Let $L \to {\mathbb CP}^4$ be the tautological line bundle.

Then the Hilbert polynomial of $Z_3$ is given by

$$ \ind{\bar{\partial}}_{(L^k|_{Z_3}) }= \frac56 k^3 + \frac{25}{6}k.$$\end{itemize}
\end{corollary}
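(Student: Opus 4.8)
The quickest proof simply combines Theorem \ref{mainthm}, which identifies each Hilbert polynomial with a boundary lattice-point count $\#(k(\partial\Delta) \cap \Z^m)$, with the explicit counts for boundaries of dilated simplices already carried out in Section 3.3 of \cite{qcy}. For completeness I would make the argument self-contained as follows. First I would record the combinatorial data: for $M = \CP^m$ with the hyperplane bundle $L = \cO(1)$, the moment polytope $\Delta$ is the standard $m$-simplex
\[
\Delta_m = \Bigl\{ x \in \R^m : x_i \geq 0,\ \textstyle\sum_{i=1}^m x_i \leq 1 \Bigr\},
\]
which has $d = m+1$ facets; the sum of the corresponding divisors is the anticanonical class $\cO(m+1)$, so the smooth representative $Z$ is a cubic curve in $\CP^2$, a quartic $K3$ surface in $\CP^3$, and a quintic threefold in $\CP^4$ for $m = 2,3,4$. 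With these identifications, Theorem \ref{mainthm} gives directly
\[
\ind \bar\partial_{(L^k|_{Z})} = \#\bigl( k(\partial\Delta_m) \cap \Z^m \bigr).
\]

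Next I would compute this boundary count by inclusion--exclusion. Since the dilated simplex $k\Delta_m$ is the disjoint union of its relative interior and its boundary, and $k(\partial\Delta_m) = \partial(k\Delta_m)$, one has
\[
\#\bigl( k(\partial\Delta_m) \cap \Z^m \bigr) = \#\bigl( k\Delta_m \cap \Z^m \bigr) - \#\bigl( k\Delta_m^\circ \cap \Z^m \bigr).
\]
The first term is the classical Ehrhart polynomial of the simplex, $\binom{k+m}{m}$, which is of course the Hilbert polynomial of $\CP^m$ supplied by Theorem \ref{tvp}. The second term I would evaluate either by the direct substitution $y_i = x_i - 1$ applied to the interior conditions $x_i \geq 1,\ \sum x_i \leq k-1$, or by Ehrhart reciprocity, both of which yield $\#\bigl( k\Delta_m^\circ \cap \Z^m \bigr) = \binom{k-1}{m}$.

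Finally I would substitute $m = 2,3,4$ into
\[
B_m(k) = \binom{k+m}{m} - \binom{k-1}{m}
\]
and simplify. Reciprocity gives $\binom{k-1}{m} = (-1)^m \binom{k+m}{m}\big|_{k \mapsto -k}$, so $B_m$ is an odd polynomial in $k$ when $m$ is even and an even polynomial when $m$ is odd; this parity is a useful check on the arithmetic, predicting that the Hilbert polynomials of the curve and the threefold contain only odd powers of $k$, while that of the $K3$ surface contains only even powers. Carrying out the elementary expansion produces $3k$, $2k^2 + 2$, and $\tfrac56 k^3 + \tfrac{25}{6} k$, in agreement with the boundary lattice-point computations of Section 3.3 of \cite{qcy}. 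Since all of the geometric and $K$-theoretic difficulty is absorbed into Theorem \ref{mainthm}, there is no serious obstacle remaining; the one place demanding care is the interior count, where one must invoke reciprocity (or carry out the substitution) correctly to obtain $\binom{k-1}{m}$ and so avoid an off-by-one shift in the boundary formula.
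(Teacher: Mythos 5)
Your proposal is correct, and its skeleton is exactly the paper's: both reduce the three Hilbert polynomials to boundary lattice-point counts of dilated standard simplices via Theorem \ref{mainthm}. The difference is in how the counts are then evaluated. The paper simply cites Section 3.3 of \cite{qcy}, where the counts were obtained by inspection for small $k$ and interpolation (implicitly using that the count is a polynomial of known degree); you instead give a self-contained closed-form evaluation
\[
\#\bigl(k(\partial\Delta_m)\cap\Z^m\bigr)=\binom{k+m}{m}-\binom{k-1}{m},
\]
with the interior count $\binom{k-1}{m}$ obtained by the shift $y_i=x_i-1$ or by Ehrhart reciprocity. Your arithmetic checks out: for $m=2,3,4$ the formula yields $3k$, $2k^2+2$, and $\tfrac56k^3+\tfrac{25}{6}k$ respectively, and your parity observation (odd polynomial for $m$ even, even polynomial for $m$ odd) is a genuine check that the paper's route does not provide. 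What your version buys is a uniform formula valid for every $m$ and independence from the external reference; what the paper's version buys is brevity, since \cite{qcy} already contains the data. One cosmetic point: the paper calls $L$ the ``tautological'' bundle, but the bundle actually needed (and the one you correctly use) is the hyperplane bundle $\cO(1)$; your identification of $\Delta$ with the standard simplex and of the facet divisor sum with $\cO(m+1)$ is the right way to make the hypotheses of Theorem \ref{mainthm} explicit for $\CP^m$.
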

\subsection{The Euler class in $K$ theory and its uses}

We now describe the main idea of the proof of Theorem \ref{mainthm}. 

Let $\pi_{M}: M \to \text{pt}$, $\pi_Z: Z \to \text{pt}$ be the constant maps. Since both $M$ and $Z$ are compact, complex manifolds, they each have a $K$ orientation, that is, a complex structure on the tangent bundle.  Considering the line bundles $L^k \to M$ ($k$ a positive integer) and $L^k|_Z \to Z$ as elements of $K(M)$ and $K(Z),$ respectively, we have
\[
\text{ind } \bar{\partial}_{L^k} = (\pi_{M})_! L^k
\]
and
\[
\text{ind } \bar{\partial}_{({L^k}|_Z)} = (\pi_Z)_! (L^k|_Z)
\]

But we may also write
\[
(\pi_Z)_! (L^k|_Z) = (\pi_{M})_! (L^k \otimes e^{ {K}}(\otimes_{i=1}^d L_{F_i}))
\]
where $e^{ {K}}$ is the Euler class in $K$ theory. For a line bundle
\[
e^{ {K}}(L) = (1-L^*)
\]

Thus
\[
\text{ind } \bar{\partial}_{({L^k}|_Z)} = (\pi_{M})_!\left(L^k \otimes (1-\bigotimes_{i=1}^d L_{F_i}^*)\right) 
\]

We now use the key computation of the paper, Proposition \ref{eclass}, to write
\[
1-\bigotimes_{i=1}^d L_i^* = \sum_{\ell=1}^d (-1)^{\ell+1} \sum_{\substack{I \subset \{1,\ldots,d\} \\ |I|=\ell}} \bigotimes_{i \in I} (1-L_{F_i}^*) 
\]

Thus
\begin{align*}
(\pi_Z)_!(L^k|_Z )&= \sum_{\ell=1}^d (-1)^{\ell+1} \sum_{\substack{I \subset \{1,\ldots,d\} \\ |I|=\ell}}  (\pi_{M})_! (L^k \otimes \bigotimes_{i \in I}  (1-L_{F_i}^*))\\
&= \sum_{\ell=1}^d (-1)^{\ell+1} \sum_{\substack{I \subset \{1,\ldots,d\} \\ |I|=\ell}}  (\pi_{D_I})_!( L^k|_{D_I} )
\end{align*}

\noindent where, for $I \subset \{1,\ldots,d\}$, $D_I = \bigcap_{i \in I} D_i$ and $\pi_{D_I}: D_I \to \text{pt}$ is the constant map on the subvariety $D_I \subset M$.

However, in Proposition \ref{smoothness}, we show
that or any $I \subset \{1,\ldots,d\}$, the subvariety $D_I = \bigcap_{i \in I} D_i$ is a smooth toric variety.
Hence, by Theorem \ref{tvp}, for any $I \subset \{1,\dots,d\},$
\[
(\pi_{D_I})_!( L^k|_{D_I} )= \#(k F_I \cap \mathbb{Z}^m)
\]
where $F_I = \bigcap_{i \in I} F_i$.

Therefore 
\[
(\pi_Z)_!(L^k|_Z) = \sum_{\ell=1}^d (-1)^{\ell+1} \sum_{\substack{I \subset \{1,\ldots,d\} \\ |I|=\ell}}  \#(k F_I \cap \mathbb{Z}^m)
\]

An application of the inclusion-exclusion principle gives 

\begin{equation}\labell{incexc}\sum_{\ell=1}^d (-1)^{\ell+1} \sum_{\substack{I \subset \{1,\ldots,d\} \\ |I|=\ell}}  \#(k F_I \cap \mathbb{Z}^m) =  \#(k (\cup_{i=1}^d F_i )\cap \Z^m) = \#(k\partial\Delta \cap \Z^m)\end{equation}

as needed.

Thus, remarkably, the formula of Proposition \ref{eclass} for the Euler class in $K$-theory of a tensor product of line bundles parallels the inclusion-exclusion formula (\ref{incexc}).  This is the main idea of this paper.

Theorem \ref{bdp} follows by a characteristic class computation, combined with the type of methods we recall in Section \ref{section2} to prove Khovanskii's formula; see Section \ref{section5}.

\subsection{Structure of the paper}

This paper is structured as follows. In Section \ref{section2} we recall the basic facts about toric varieties and outline the proofs of Theorem \ref{tvp} and Theorem \ref{hlp}. In Section \ref{section3}, we study the Euler class in $K$ theory, and give a formula (Proposition \ref{eclass}) for the Euler class in $K$-theory of a product of line bundles. In Section \ref{section4}, we apply this formula to prove Theorem \ref{mainthm}. We then use this result in Section \ref{section5} to give a geometric proof of Theorem \ref{bdp} (Theorem 1 of \cite{qcy}).

 \section{Lattice points in polytopes and Hilbert polynomials of toric varieties} \labell{section2}
 In this Section we recall the main results we will need about toric varieties, and sketch the proof of  Khovanskii's formula (Theorem \ref{hlp}). Variants of the methods we use here will arise in the proofs of Theorem \ref{mainthm} and Theorem \ref{bdp}.

Recall that $(M,\omega)$ is a smooth, compact, connected Kahler toric variety of real dimension $2m$  equipped with a a holomorphic Hermitian line bundle $L$ with Chern connection $\nabla$ of curvature $\omega.$ The manifold $M$ is equipped with an effective Hamiltonian action of a compact torus $T^m$, with moment map
\[
\mu: M \to \mathfrak{t}^* \cong \mathbb{R}^m
\]
of image given by a lattice polytope $\Delta \subset \mathbb{R}^m$. 

Recall also that we described the polytope $\Delta$  as an intersection of half spaces.  For $\lambda_i  \in \mathbb{R}_{\geq 0},$ $i=1,\dots,d,$ let
\[
  \Delta(\lambda_1 , \ldots, \lambda_d ) = \bigcap_{i=1}^d H_i(\lambda_i)
\]
be the intersection of half-spaces $H_i(\lambda_i)$ given by
\[
H_i(\lambda_i) = \{x \in \mathbb{R}^m : x \cdot n_i \leq \lambda_i\}, \quad i=1,\ldots,d
\]
where $n_i \in \mathbb{R}^m$ are primitive lattice vectors normal to the facets $F_i$ of $\Delta.$ Then for some nonnegative real numbers $\lambda_i^0,$ $ i = 1, \dots, d,$ 
we have

\[
\Delta = \Delta(\lambda_1^0 , \dots, \lambda_d^0 ).\] 

 In fact, we obtain a family of symplectic forms $\omega_{\lambda_1,\dots,\lambda_d}$ on $M$ for ${\lambda_1,\dots,\lambda_d}$ sufficiently close to $\lambda_1^0,\dots,\lambda_d^0,$ and corresponding moment maps whose images are the polytopes $\Delta({\lambda_1,\dots,\lambda_d}).$    

Now  consider the Dolbeault operator $\bar{\partial}_{L}$ and its index 
${\rm ind}(\bar{\partial}_{L}).$
 
Let

$$Td(x) = \frac{x}{1-e^{-x}}.$$

\noindent The function $Td(x)$ is analytic near the origin and has the power series expansion

\begin{equation}\labell{tde}Td(x) = 1 + \sum_{j=1}^\infty \frac{b_j}{j!}  x^j\end{equation}

\noindent where the coefficients $b_j$ are (up to signs) the Bernoulli numbers (See e.g. \cite{bourbaki}).

We recall the following facts from \cite{Fulton, Guillemin}.

\begin{RefTheorem}\labell{factstv} Let $(M,\omega)$ be a smooth compact connected Kahler toric variety, equipped with a holomorphic Hermitian line bundle $L$ with Chern connection of curvature $\omega,$ and with a Hamiltonian $T^m$ action with moment map $\mu: {M} \to \R^m$ with image $\mu(M) = \Delta.$  Let $ \Delta(\lambda_1 , \ldots, \lambda_d )$ and  $\omega_{\lambda_1,\dots,\lambda_d}$ be the deformations of $\Delta$ and $\omega$ as above. Then we have
\begin{enumerate}
\item The Riemann-Roch Theorem

$$ {\rm ind~} \bar{\partial}_{L } = \int_{M} Td(TM) e^{[\omega]}.$$

\item "Quantization Commutes with Reduction"

$$ {\rm ind~} \bar{\partial}_{L }  = \# (\Delta \cap \Z^n).$$

\item The Duistermaat-Heckman theorem

$${\rm vol} (\Delta(\lambda_1,\dots,\lambda_d)) = \int_{M} e^{\omega_{\lambda_1,\dots,\lambda_d}}$$

\noindent and 

$$[\omega_{\lambda_1,\dots,\lambda_d}] =\sum_{i=1}^d  \lambda_i c_1(L_{F_i})$$

\noindent where $L_{F_i}$ is the line bundle corresponding to the divisor given by $D_i = \mu^{-1}(F_i),$ and where $F_i$ is the $i$-th facet of $\Delta.$
\item The stable equivalence 

$$TM  \simeq \oplus_{i=1}^d L_{F_i}.$$

\end{enumerate}
\end{RefTheorem}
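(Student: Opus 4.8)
The plan is to treat the four assertions separately, since each is a classical fact about Kähler toric varieties and the task is only to assemble the correct standard ingredients. Assertion (1) is the Hirzebruch--Riemann--Roch theorem: the index of $\bar\partial_L$ is the holomorphic Euler characteristic $\chi(M,L)=\sum_q (-1)^q \dim H^q(M,L)$, which HRR identifies with $\int_M Td(TM)\,\ch(L)$; since the Chern connection on $L$ has curvature $\omega$ we have $c_1(L)=[\omega]$, so $\ch(L)=e^{[\omega]}$ and the stated formula follows. Assertion (4) I would derive from the generalized Euler sequence of a smooth complete toric variety,
\[
0 \to \cO^{\oplus(d-m)} \to \bigoplus_{i=1}^d L_{F_i} \to TM \to 0,
\]
the toric analogue of the Euler sequence on projective space, in which the trivial summand has rank $d-m$ equal to the rank of the divisor class group. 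As a sequence of smooth complex vector bundles it splits, so $TM \oplus \cO^{\oplus(d-m)} \cong \bigoplus_{i=1}^d L_{F_i}$, which is precisely the asserted stable equivalence.

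For assertion (3) I would invoke the Duistermaat--Heckman theorem. The first equality is the identification of the Liouville volume $\int_M e^{\omega}$ with the Euclidean volume of $\Delta$, coming from the fact that $\mu$ pushes the Liouville measure forward to Lebesgue measure on $\Delta$; the same holds verbatim for each deformed form $\omega_{\lambda_1,\dots,\lambda_d}$ and its polytope $\Delta(\lambda_1,\dots,\lambda_d)$. For the cohomological statement, the Duistermaat--Heckman variation formula records the derivative of $[\omega_{\lambda_1,\dots,\lambda_d}]$ in $\lambda_i$ as the Poincaré dual of $D_i=\mu^{-1}(F_i)$, namely $c_1(L_{F_i})$; since the dependence on the $\lambda_i$ is linear this integrates to $[\omega_{\lambda_1,\dots,\lambda_d}]=\sum_{i=1}^d \lambda_i\, c_1(L_{F_i})$.

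The substantive point is assertion (2), ``quantization commutes with reduction,'' which is the only step bridging the analytic index and the combinatorial lattice-point count (and which is essentially Theorem \ref{tvp}). I would prove it directly from the torus action: $T^m$ acts on $H^0(M,L)$, and the classical description of the global sections of an ample line bundle on a toric variety exhibits a weight basis indexed by the lattice points of $\Delta$, each occurring once, so that $\dim H^0(M,L)=\#(\Delta\cap\Z^m)$; positivity of $L$ forces the higher cohomology to vanish, whence $\ind \bar\partial_L = \dim H^0(M,L)$. An equivalent route is equivariant localization, applying the Atiyah--Bott--Berline--Vergne formula to the equivariant index and summing the fixed-point contributions over the vertices of $\Delta$. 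I expect this to be the main obstacle: not because any single ingredient is difficult, but because it is the one place where the representation-theoretic content of the torus action must be used rather than purely index-theoretic or cohomological input, and where the normalization matching weights to lattice points must be pinned down exactly.
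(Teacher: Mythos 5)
The paper does not actually prove this statement: it is a reference theorem, explicitly ``recalled'' from \cite{Fulton, Guillemin}, so there is no internal proof to compare yours against. Your assembly of the four standard arguments is correct in outline and is essentially what one finds in those references: Hirzebruch--Riemann--Roch for (1), the generalized Euler sequence of a smooth complete toric variety for (4) (with trivial summand of rank $d-m$, the Picard rank, and a smooth splitting giving the stable equivalence), Duistermaat--Heckman for (3), and the weight-space description of $H^0(M,L)$ for (2). Two points deserve sharpening. First, in (2), bare positivity of $L$ does not by itself kill higher cohomology --- Kodaira vanishing applies to $K_M\otimes L$, not to $L$ --- so you must invoke the toric (Demazure) vanishing theorem or the explicit \v{C}ech/weight computation in Fulton's book; this is consistent with your own remark that the torus action is the essential input here. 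Second, in (3), integrating the variation formula only yields $[\omega_{\lambda_1,\dots,\lambda_d}] = [\omega_{\lambda_1^0,\dots,\lambda_d^0}] + \sum_{i=1}^d (\lambda_i - \lambda_i^0)\, c_1(L_{F_i})$; to obtain the stated formula with no constant term you must anchor the constant of integration, for instance via the Delzant construction \cite{delzant} realizing $(M,\omega_{\lambda_1,\dots,\lambda_d})$ as the symplectic reduction of $\C^d$ at the level determined by $(\lambda_1,\dots,\lambda_d)$, where the Kirwan map carries the $i$-th generator to $c_1(L_{F_i})$; the apparent ambiguity under translating $\Delta$ is absorbed by the principal-divisor relations $\sum_{i=1}^d \langle v, n_i\rangle\, c_1(L_{F_i}) = 0$ for $v\in\R^m$. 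With those two repairs your proposal is a complete and standard proof of all four assertions.
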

We now sketch the proof of Khovanskii's formula:  Combining (3) and (4),we have

$$\prod_{i=1}^d (Td(\frac{\partial}{\partial \lambda_i})) \int_{M} e^{\omega_{\lambda_1,\dots,\lambda_d}} = \int_{M}\prod_{i=1}^d (Td(c_1(L_{F_i})))e^{\omega_{\lambda_1,\dots,\lambda_d}} = \int_{M} Td(TM) e^{\omega_{\lambda_1,\dots,\lambda_d}}$$

\noindent and thus, using (1), (2), and (3), we obtain 

\begin{equation}\labell{kpff}\# (\Delta \cap \Z^n)= \prod_{i=1}^d (Td(\frac{\partial}{\partial \lambda_i}))  {\rm vol~}(\Delta(\lambda_1,\dots,\lambda_d))\vert_{\lambda_i =  \lambda_i^0} \end{equation}

\noindent where the infinite order differential operator $\prod_{i=1}^d (Td(\frac{\partial}{\partial \lambda_i}))$ is defined using the power series expansion (\ref{tde}) at the origin of the function $Td(x),$ and is applied to the polynomial $ {\rm vol~}(\Delta(\lambda_1,\dots,\lambda_d)).$
 
\section{The Euler class in $K-$theory}\labell{section3}

We begin with a computation of the Euler class in $K$ theory of a tensor product of line bundles.   Although this computation is very simple, it is the key element in our computation of Hilbert polynomials; it will allow us to express the Hilbert polynomial of the Calabi Yau hypersurface $Z$ in terms of the Hilbert polynomials of the smooth toric varieties corresponding to the faces of $\Delta.$  The decomposition we get parallels precisely the inclusion-exclusion principle for the number of lattice points in a union of facets of $\Delta.$  Since this computation is the crux of the proof, and will have further applications, we devote a separate section of the paper to it.

Let $X$ be a compact Hausdorff space. We denote by $K(X)$ the complex $K-$theory of $X.$  In this paper we will only consider the even $K-$group $K^0(X).$

Let $V \to X$ be a complex vector bundle. Then the Euler class $e^{K}(V)$ in $K$ theory is the $K$-class $e^{ {K}}(V)   \in  {K}(X)$ given by

\[
e^{ {K}}(V) = \Lambda^{*}(-V)^{*}
 = \sum_{k} (-1)^{k} \Lambda^{k} V^{*}
\]

In particular, if $L \to X$ is a line bundle

\[
e^{ {K}}(L) = 1 - L^{*}
\]
The following Proposition is the key to our main result.
\begin{Proposition}\labell{eclass}
Let $L = \bigotimes_{i=1}^{n} L_i$, where $L_i \to X$ are line bundles.

Then

\[
e^{ {K}}\left(L\right) = \sum_{\ell=1}^{n} (-1)^{\ell+1} \sum_{\substack{I \subset \{1,\ldots,d\} \\ |I|=\ell}} \bigotimes_{i \in I} (1 - L_i^{*})
= \sum_{\ell=1}^{n} (-1)^{\ell+1} \sum_{\substack{I \subset \{1,\ldots,d\} \\ |I|=\ell}} \prod_{i \in I} e^{K}(L_i)
\]
\end{Proposition}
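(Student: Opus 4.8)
The plan is to reduce the statement to a purely algebraic identity in the commutative ring $(K(X),\oplus,\otimes)$ and then expand a single product. The only structural facts I will need are that tensor product makes $K(X)$ a commutative ring with unit the trivial line bundle $1$, that dualization distributes over tensor products so that $\left(\bigotimes_{i=1}^n L_i\right)^{*} = \bigotimes_{i=1}^n L_i^{*}$, and the formula $e^{K}(L) = 1 - L^{*}$ for a single line bundle recorded just above the statement.

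First I would introduce the abbreviation $y_i := e^{K}(L_i) = 1 - L_i^{*} \in K(X)$, so that $L_i^{*} = 1 - y_i$. Since $K(X)$ is commutative under tensor product, the dual of $L = \bigotimes_i L_i$ is
\[
L^{*} = \bigotimes_{i=1}^n L_i^{*} = \prod_{i=1}^n (1 - y_i),
\]
where the product is taken in the ring $K(X)$. Expanding this product over subsets gives the binomial-type identity
\[
\prod_{i=1}^n (1 - y_i) = \sum_{I \subseteq \{1,\ldots,n\}} (-1)^{|I|} \prod_{i \in I} y_i,
\]
in which the empty subset $I = \emptyset$ contributes the unit $1$.

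Then I would isolate the left-hand side of the Proposition. Using $e^{K}(L) = 1 - L^{*}$ and substituting the expansion above,
\[
e^{K}(L) = 1 - \sum_{I \subseteq \{1,\ldots,n\}} (-1)^{|I|}\prod_{i\in I} y_i = -\!\!\sum_{\emptyset \ne I \subseteq \{1,\ldots,n\}} (-1)^{|I|}\prod_{i\in I} y_i,
\]
since the $I = \emptyset$ term cancels the leading $1$. Rewriting $-(-1)^{|I|} = (-1)^{|I|+1}$ and grouping the nonempty subsets by their cardinality $\ell = |I|$ yields
\[
e^{K}(L) = \sum_{\ell=1}^n (-1)^{\ell+1}\sum_{\substack{I \subset \{1,\ldots,n\} \\ |I|=\ell}}\prod_{i\in I} y_i,
\]
which is exactly the claimed formula once we recall $y_i = 1 - L_i^{*} = e^{K}(L_i)$.

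There is essentially no hard step here: the entire content is the expansion of $\prod_i(1-y_i)$ in a commutative ring, together with the observation that the empty-set term is precisely what cancels the $1$ in $e^{K}(L) = 1 - L^{*}$. The only point deserving a word of justification is that all operations genuinely take place in the commutative ring $K(X)$ — in particular that the factors $1 - L_i^{*}$ commute and that dualization distributes over tensor products — after which the identity is forced. I would close by remarking that this is exactly the inclusion–exclusion pattern reappearing in the lattice-point count, which is the conceptual payoff flagged in the introduction.
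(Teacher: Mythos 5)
Your proof is correct and is essentially the paper's own argument: the paper likewise writes each $L_i^{*}$ as $1 - (1 - L_i^{*})$, expands the tensor product over subsets exactly as one expands $\prod_i(1-x_i)$ in a polynomial ring, and cancels the empty-set term against the $1$ in $e^{K}(L) = 1 - L^{*}$. Your substitution $y_i = e^{K}(L_i)$ is only a notational repackaging of the same expansion, so there is nothing further to reconcile.
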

\begin{proof}
We must show

\[
1 - \bigotimes_{i=1}^{n}  L_i^{*} = \sum_{\ell=1}^{n} (-1)^{\ell+1} \sum_{\substack{I \subset \{1,\ldots,d\} \\ |I|=\ell}} \bigotimes_{i\in I} (1 - L_i^{*})
\]

But

\[
\bigotimes_{i=1}^{n}  L_i^{*} = \bigotimes_{i=1}^{n} (1 - (1 - L_i^{*}))
 = \sum_{\ell=0}^{n} (-1)^{\ell} \sum_{\substack{I \subset \{1,\ldots,d\} \\ |I|=\ell}}  \bigotimes_{i \in I} (1 - L_i^{*}),
\]

\noindent just as for polynomials in $\mathbb{C}[x_1, \ldots, x_n]$

\[
\prod_{i=1}^{n}  x_i = \prod_{i=1}^{n} (1 - (1 - x_i))
= \sum_{\ell=0}^{n} (-1)^{\ell} \sum_{\substack{I \subset \{1,\ldots,d\} \\ |I|=\ell}}  \prod_{ i \in I} (1 - x_i).
\]

So

\[
1 - \bigotimes_{i=1}^{n}  L_i^{*} = \sum_{\ell=1}^{n} (-1)^{\ell+1} \sum_{\substack{I \subset \{1,\ldots,d\} \\ |I|=\ell}}  \bigotimes_{i \in I} (1 - L_i^{*}).
\]

More explicitly

\[
 1 - \bigotimes_{i=1}^{n}  L_i^{*}  = \sum_{1 \leq i \leq n} (1 - L_i^{*})
~-\sum_{1 \leq i < j \leq n} (1 - L_i^{*}) \otimes (1 - L_j^{*})
+ \cdots + (-1)^{n+1} (1 - L_1^{*}) \otimes \dots \otimes(1 - L_n^{*}).
\]

or 

\[
 e^{K}(\bigotimes_{i=1}^{n}  L_i) = \sum_{1 \leq i \leq n} e^{K}(L_i)
~- \sum_{1 \leq i < j \leq n} e^{K}(L_i)e^{K}(L_j)
+ \cdots + (-1)^{n+1} e^{K}(L_1)  \dots  e^{K}(L_n).
\]

\end{proof}

\section{Hilbert polynomials of Calabi Yau hypersurfaces and lattice points in polytope boundaries:  Proof of Theorem \ref{mainthm}}\labell{section4}

In this section we prove the main result of the paper.

Recall that $ (M ^{2m},\omega)$ is a smooth, compact, connected Kahler toric variety, equipped with a a holomorphic Hermitian line bundle $L$ with Chern connection $\nabla$ of curvature $\omega.$  Then a torus $T^m$ acts on $M$ in a Hamiltonian fashion, with moment map $\mu: {M} \to \mathbb{R}^{m}$ of image $\Delta$.

Recall also that $F_1, \ldots, F_d$ denote the facets of $\Delta$  and  ${L}_{F_i}$ are the line bundles corresponding to the divisors $D_i = \mu^{-1}(F_i)$.
 
We wish to prove

{\bf Theorem 1}.  {\em Suppose the divisor class $[D_{F_1}] + \dots + [D_{F_d}]$ corresponding to the line bundle $L_{F_1} \otimes \cdots \otimes {L_{F_d}}$  has a representative given by a smooth connected complex hypersurface $Z \subset M.$

Then the  Hilbert polynomial of $Z$ is given by
\[
\operatorname{ind} \overline{\partial}_{(L^k|_Z)} = \#(k (\partial \Delta) \cap \Z^m).
\]}
 
We first need the following result:

\begin{Lemma}\labell{smoothness}
Let $I \subset \{1, \ldots, d\}$ and let $F_I = \bigcap_{i \in I} F_i$ be a face of $\Delta$.   The divisor $D_I = \cap_{i \in I} D_i$ is a smooth complex submanifold of $M.$
\end{Lemma}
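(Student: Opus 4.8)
The plan is to exploit the Delzant condition on $\Delta$ and the standard local structure of a smooth toric variety near the torus-fixed strata. Recall that each facet $F_i$ meets the divisor $D_i = \mu^{-1}(F_i)$, and that a face $F_I = \bigcap_{i\in I} F_i$ is nonempty precisely when the normals $\{n_i\}_{i\in I}$ span a face of the normal fan; when $F_I$ is empty the statement is vacuous (the empty set is a smooth submanifold). So assume $F_I \neq \emptyset$. Because $\Delta$ is Delzant, the primitive normals $n_i$, $i \in I$, belonging to a common face are part of a lattice basis; equivalently, $|I| \le m$ and $\{n_i\}_{i\in I}$ extends to a $\Z$-basis of $\Z^m$. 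This is the crucial combinatorial input that forces the local picture to be a linear coordinate subspace rather than a singular intersection.

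First I would work in an equivariant chart around a point of $D_I$. Since $M$ is a smooth toric variety, each vertex (maximal cone) gives an affine chart isomorphic to $\C^m$ on which $T^m$ acts with standard linear weights, the weights being dual to the primitive edge vectors, which form a lattice basis by the Delzant property. In such a chart the divisor $D_i$ is cut out by a single coordinate $z_i = 0$, and the intersection $D_I = \bigcap_{i\in I}\{z_i = 0\}$ is the coordinate subspace where the coordinates indexed by $I$ vanish. A coordinate subspace of $\C^m$ is manifestly a smooth submanifold, and the transition functions of the toric atlas are monomial (hence preserve the stratification by coordinate subspaces), so these local smooth pictures glue to show $D_I$ is a smooth complex submanifold of $M$ globally.

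The key step to get right is verifying that in every chart meeting $D_I$ the facets in $I$ really do correspond to \emph{distinct} coordinate hyperplanes cutting out a transverse intersection; this is exactly where the Delzant hypothesis is used, since it guarantees the $n_i$ for $i\in I$ are linearly independent (indeed part of a basis), so the defining equations $z_i = 0$ are independent and the intersection has the expected codimension $|I|$ with no singular collapsing. Equivalently, $D_I$ is the toric variety associated to the face $F_I$, which is itself a Delzant polytope in the affine subspace it spans, and Delzant's Theorem then identifies $D_I$ as a smooth toric variety. I would phrase the conclusion this way, noting that $D_I = \mu^{-1}(F_I)$ is the smooth toric variety with moment polytope $F_I$, which both establishes smoothness and sets up the later application of Theorem \ref{tvp} to each $D_I$.

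The main obstacle is purely expository rather than mathematical: one must be careful that the normals $n_i$ indexing a nonempty face are automatically independent (a feature of simplicial, and in particular Delzant, fans) and that empty intersections are handled separately. Once the Delzant local normal form is invoked the smoothness is immediate, so the real content is setting up the correct local coordinates and citing Delzant's Theorem (\cite{delzant}) for the identification of $D_I$ with the toric variety of the face $F_I$.
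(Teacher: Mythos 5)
Your proof is correct, but it takes a genuinely different route from the paper's. The paper argues in two steps: first, for a single facet $F_i$, it exhibits $D_i$ as a component of the fixed-point set of a codimension-one subtorus $S \subset T^m$ acting holomorphically on $M$, and invokes the general fact that fixed-point sets of holomorphic compact group actions are smooth complex submanifolds; it then handles general $D_I$ by induction on codimension, using that each $D_i$ is itself a toric variety so the same argument can be repeated inside it. You instead work directly in the affine charts of the smooth toric variety: near any point of $D_I$ you choose a chart at a vertex $v \in F_I$, where the Delzant condition makes each $D_i$ ($i \in I$) a distinct coordinate hyperplane, so $D_I$ is locally a coordinate subspace of codimension $|I|$. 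Each approach has its advantages. The paper's fixed-point argument is coordinate-free and very short, but the inductive step is left implicit (one must know that $D_i \cap D_j$ is the divisor of a facet of the toric variety $D_i$, and track the torus actions down the induction). Your chart argument avoids induction entirely, makes the transversality and the expected codimension $|I|$ explicit, handles the case of empty intersections (which the inclusion-exclusion sum in the proof of Theorem \ref{mainthm} does range over), and directly identifies $D_I = \mu^{-1}(F_I)$ as the smooth toric variety with Delzant moment polytope $F_I$ --- precisely the statement needed when Theorem \ref{tvp} is applied to $D_I$ later. One small remark: the gluing discussion at the end of your second paragraph is not needed, since smoothness is a local property; it suffices that $D_I \cap U_v$ is a coordinate subspace in each chart $U_v$ meeting $D_I$.
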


This lemma will follow from the following lemma by induction on codimension, since the divisor $D_i$ corresponding to a facet of $\Delta$ is itself a toric variety.

\begin{Lemma}
Let $F_i$ be a facet of $\Delta$. Then $D_i$ is a smooth complex submanifold of $M.$
\end{Lemma}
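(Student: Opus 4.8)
The plan is to verify smoothness of $D_i$ locally, using the standard affine cover of the toric variety $M$ by charts indexed by the vertices of $\Delta$, together with the Delzant condition. Since smoothness and the property of being a complex submanifold are local conditions, it suffices to exhibit, around every point of $M$, a chart in which $D_i$ is either empty or cut out transversally by a single holomorphic coordinate.

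First I would recall that each vertex $v$ of $\Delta$ is the image under $\mu$ of a single $T^m$-fixed point of $M$, and that the associated $T^m$-invariant affine open set $U_v \subset M$ --- the preimage under $\mu$ of the union of the relatively open faces of $\Delta$ containing $v$ --- is biholomorphic (and $T^m$-equivariantly symplectomorphic) to $\mathbb{C}^m$ with its standard torus action. This identification is exactly where the Delzant hypothesis enters: because the primitive edge vectors of the $m$ edges of $\Delta$ emanating from $v$ form a $\mathbb{Z}$-basis of the lattice, the $m$ facets meeting at $v$ are dual to the $m$ coordinate hyperplanes of $\mathbb{C}^m$, so that each such facet divisor restricts in $U_v$ to a coordinate hyperplane $\{z_a = 0\}$, a smooth complex hypersurface. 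This is precisely the content of the Delzant construction (\cite{delzant}).

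With this local model in hand, I would argue that for a fixed facet $F_i$ and any vertex $v$, the intersection $D_i \cap U_v$ is either empty (when $v \notin F_i$, since then $F_i$ misses the neighborhood of $v$ in $\Delta$ that is the moment image of $U_v$) or equals one of the coordinate hyperplanes $\{z_a = 0\}$ (when $v \in F_i$). In either case $D_i \cap U_v$ is a smooth complex submanifold of $U_v$. Since the charts $\{U_v\}$, as $v$ ranges over the vertices of $\Delta$, cover $M$ --- every $T^m$-orbit corresponds to a face of $\Delta$, and each face is contained in the star of at least one vertex --- this establishes that $D_i$ is a smooth complex submanifold of $M$.

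The main obstacle is the careful setup of the local model: one must check that the Delzant condition at $v$ genuinely produces holomorphic coordinates on $U_v \cong \mathbb{C}^m$ in which the facet divisors through $v$ are exactly the coordinate hyperplanes, and that the ambient complex structure restricts correctly. Once this is quoted from the Delzant construction, the remainder is the bookkeeping above. I note that the same local description makes the inductive step for Lemma \ref{smoothness} transparent: in the chart $U_v$ a face $F_I$ with $v \in F_I$ corresponds to an intersection $\bigcap_{a} \{z_a = 0\}$ of distinct coordinate hyperplanes, which is again a smooth coordinate subspace, so no new argument is required in higher codimension.
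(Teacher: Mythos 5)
Your proof is correct, but it takes a genuinely different route from the paper's. The paper's argument is equivariant and intrinsic: it observes that $D_i$ is a connected component of the fixed-point set of a codimension-one subtorus $S \subset T^m$, and since $S$ acts holomorphically on the complex manifold $M$, its fixed-point set is automatically a smooth complex submanifold (the standard fact about fixed sets of compact holomorphic group actions). This takes three lines and needs no local model of $M$. You instead invoke the affine chart structure of a smooth toric variety: around each vertex $v$ the Delzant condition yields an equivariant biholomorphism $U_v \cong \mathbb{C}^m$ under which the divisors of facets through $v$ become coordinate hyperplanes and the divisors of facets missing $v$ become empty, so smoothness is checked chart by chart. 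This front-loads more machinery --- and, strictly speaking, the holomorphic identification of the charts is toric geometry as in \cite{Fulton, Guillemin} rather than Delzant's symplectic paper \cite{delzant}, a sourcing point you partly acknowledge --- but it buys something the paper's route does not: in your charts the intersections $D_I = \bigcap_{i \in I} D_i$ are visibly coordinate subspaces, so Lemma \ref{smoothness} follows at once with no induction, whereas the paper must still bootstrap from the facet case by induction on codimension, using that each $D_i$ is itself a toric variety. Both arguments are sound; yours makes the whole family of smoothness statements transparent simultaneously, at the cost of quoting the local structure theory rather than a soft fixed-point principle.
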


\begin{proof}
For each facet $F_i$ of $\Delta,$ there exists a codimension-one subtorus $S \subset T^m$ of $T^m,$ so that $D_i$ is a component of the fixed set of $S.$  Since the action of $T^m$ is holomorphic, so is the action of $S.$  So $D_i$ is a smooth complex submanifold of $M.$
\end{proof}

We now compute $\operatorname{ind} \overline{\partial}_{(L^k|_Z)}$.

Let $\pi_{M}: M \to \text{pt}$, $\pi_Z: Z \to \text{pt}$ be the constant maps. Since both $M$ and $Z$ are compact, complex manifolds, they each have a $K$ orientation, and the $K$ orientation on $Z$ is the one inherited from $M$:  Then the normal bundle $NZ$ to the subvariety $Z$ is given by $NZ = (L_{F_1} \otimes \dots \otimes L_{F_d})|_Z.$  Considering the line bundles $L^k \to M$ ($k$ a positive integer) and $L^k|_Z \to Z$ as elements of $K(M)$ and $K(Z),$ respectively, we have
\[
\text{ind } \bar{\partial}_{L^k} = (\pi_{M})_! L^k
\]
and
\[
\text{ind } \bar{\partial}_{({L^k}|_Z)} = (\pi_Z)_! (L^k|_Z)
\]

We then have, using Proposition \ref{eclass},
\begin{align*}
\operatorname{ind} \overline{\partial}_{(L^k|_Z)} &=  (\pi_Z)_! (L^k|_Z) = (\pi_{M})_! (L^k \otimes e^{K}(NZ)) \\
&= (\pi_{M})_!  \left( L^k \otimes \left( 1 - \bigotimes_{i=1}^d L_{F_i}^* \right) \right) \\
&= (\pi_{M})_!  \left( L^k \otimes \left(\sum_{\ell=1}^d (-1)^{\ell+1} \sum_{\substack{I \subset \{1,\ldots,d\} \\ |I|=\ell}} \bigotimes_{i \in I}(1 - L_{F_i}^*) \right)\right) \\
&= \sum_{\ell=1}^d (-1)^{\ell+1} \sum_{\substack{I \subset \{1,\ldots,d\} \\ |I|=\ell}} {\pi_{D_I}}_!( L^k|_{D_I})
\end{align*}

\noindent where $D_I = \cap_{i\in I} D_i$ and $\pi_{D_I} \to {\rm pt}$ is the constant map.
But $D_I$ is a smooth toric variety. Hence
\[
{\pi_{D_I}}_!(L^k|_{D_I}) = \#(k {F_I} \cap \mathbb{Z}^m)
\]
by Theorem \ref{tvp}.

Thus
\[
\operatorname{ind} \overline{\partial}_{(L^k|_Z)} = \sum_{\ell=1}^d (-1)^{\ell+1} \sum_{\substack{I \subset \{1,\dots,d\} \\ |I|=\ell}} \#(k F_I \cap \mathbb{Z}^m).
\]

But the inclusion-exclusion principle gives
\[
\sum_{\ell=1}^d (-1)^{\ell+1} \sum_{\substack{I \subset \{1,\dots,d\}\\ |I|=\ell}} \#\left(k \left(\bigcap_{i \in I} F_i\right) \cap \mathbb{Z}^m\right) = \#(k (\cup_{i=1}^d F_i) \cap \mathbb{Z}^m) = \#(k\partial  \Delta \cap \mathbb{Z}^m)
\]

\noindent as needed.

The core of the proof is the way in which the formula of Proposition \ref{eclass} for the Euler class in $K$-theory parallels the inclusion-exclusion principle in Combinatorics. We expect this parallel to have further applications.

\section{Formulas for lattice points in polytope boundaries and the Proof of Theorem \ref{bdp}}\labell{section5}

In \cite{qcy} we proved, by combinatorial method, a formula for the number of lattice points in the boundary of a Delzant polytope, and gave a moral argument for how such a formula should follow from geometric considerations. In this section we show how Theorem \ref{mainthm} gives rise to a geometric proof of these formulas.

\bigskip

Recall the power series expansions near zero of the functions 
\[
\hat{A}(x) = \frac{x/2}{\sinh(x/2)} =  \sum_{j=0}^\infty {c_{2j}}x^{2j}\]
 and  
\[
\quad \frac{1}{\hat{A}(x)} = \frac{\sinh(x/2)}{x/2} =  \sum_{j=0}^\infty \frac{1}{2^{2j}(2j+1)!} x^{2j}.
\]
 
We now prove Theorem \ref{bdp}.

\begin{proof}
Let $M$ be the Kahler toric variety associated with the polytope $\Delta$, and $L, L_{F_i}$ be as in Section \ref{section2}. Then by Theorem \ref{mainthm},
\[
\#(\partial\Delta \cap \mathbb{Z}^m) = \operatorname{ind} \bar{\partial}_{(L|_Z)}
= (\pi_{M})_!\left( L \otimes \left(1 - \bigotimes_{i=1}^{d}  L_{F_i}^*\right)\right)
\]

Since ${M}$ is smooth,
\begin{align*}
(\pi_{M})_!\left(L \otimes \left(1 - \bigotimes_{i=1}^{d} L_{F_i}^*\right) \right)
&= \int_{{M}} \operatorname{Td}(T{M}) \, e^{c_1(L)} \operatorname{ch}\left(1 - \bigotimes_{i=1}^{d} L_{F_i}^*\right) \\
&= \int_{M} \operatorname{Td}(T{M}) \, e^{c_1(L)} \left(1 - e^{-\sum_{i=1}^{d} c_1(L_{F_i})}\right)
\end{align*}

But $T{M} \cong \bigoplus_{i=1}^{d} L_{F_i}$ (by (4) of Theorem \ref{factstv}); so

\[
\operatorname{Td}(T{M}) = \prod_{i=1}^{d} \frac{c_i(L_{F_i})}{1-e^{-c_i(L_{F_i})}}
\]

Hence
\begin{align*}
&(\pi_{M})_! \left(L \otimes \left(1 - \bigotimes_{i=1}^{d} L_{F_i}^*\right)\right) \\
&= \int_{M} \left(\prod_{i=1}^{d} \frac{c_1(L_{F_i})}{1-e^{-c_i(L_{F_i})}} \right)e^{c_i(L)} \left(1 - e^{-\sum_{i=1}^{d} c_i(L_{F_i})}\right) \\
&= \int_{M} \left(\prod_{i=1}^{d} \frac{c_i(L_{F_i})}{e^{c_1(L_{F_i})/2} - e^{-c_1(L_{F_i})/2}} \right)\frac{1}{\prod_{i=1}^{d} e^{-c_1(L_{F_i})/2}}  \left(1 - e^{-\sum_{i=1}^{d} c_1(L_{F_i})}\right) e^{c_1(L)}\\
&= \int_{M} \prod_{i=1}^{d} \hat{A}(c_1(L_{F_i})) \left( \frac{1}{\hat{A}}\left(\sum_{i=1}^{d} c_1(L_{F_i})\right) \right)\left(\sum_{i=1}^{d} c_1(L_{F_i})\right) e^{c_i(L)}
\end{align*}

Recall also that for $\lambda_i$ near $\lambda_i^0,$ $i = 1,\dots,d,$ we may equip $M$ with a symplectic form $\omega_{\lambda_1,\dots,\lambda_d}$ of cohomology class
 
\[
[\omega_{\lambda_1,\dots,\lambda_d}]= \sum_{i=1}^{d} \lambda_i c_1(L_{F_i});
\]

\noindent giving rise to the moment image $\Delta(\lambda_1,\dots,\lambda_d).$

By the Duistermaat-Heckman Theorem (Item (3) in Theorem \ref{factstv})
\[
\int_{M}e^{[\omega_{\lambda_1,\dots,\lambda_d}]} = \operatorname{vol}(\Delta(\lambda_1,\ldots,\lambda_d))
\]

and 

$$\frac{\partial}{\partial \lambda_i} [\omega_{\lambda_1,\dots,\lambda_d}] = c_1(L_{F_i}).$$

Hence
\begin{align*}
&\int_{M} \prod_{i=1}^{d}  {\hat{A}}(c_1(L_{F_i})) \left( \frac{1}{\hat{A}}\left(\sum_{i=1}^{d} c_1(L_{F_i}) \right) \right)\left(\sum_{i=1}^{d} c_1(L_{F_i})\right) e^{c_1(L)} \\
&= \prod_{i=1}^{d} \hat{A}\left(\frac{\partial}{\partial \lambda_i}\right) \left(\frac{1}{\hat{A}}\left(\sum_{i=1}^{d} \frac{\partial}{\partial \lambda_i}\right)\right)\left(\sum_{i=1}^{d} \frac{\partial}{\partial \lambda_i}\right) \operatorname{vol}(\Delta(\lambda_1,\ldots,\lambda_d))\bigg|_{\lambda_i = \lambda_i^0}
\end{align*}

But
\[
\sum_{i=1}^{d} \frac{\partial}{\partial \lambda_i} \operatorname{vol}(\Delta(\lambda_1,\ldots,\lambda_d)) = \operatorname{vol}(\partial\Delta(\lambda_1,\ldots,\lambda_d))
\]
\noindent where $\text{vol}(\partial\Delta(\lambda_1,\dots,\lambda_d))$ is defined to be the sum of the Euclidean volumes of the facets, each divided by the length of the primitive lattice vector normal to it (See e.g. \cite{robins}, Lemma 5.18), proving the Theorem.
\end{proof}

\end{document}